\newtheorem{theorem}{Theorem}[section]
\newtheorem{corollary}[theorem]{Corollary}
\theoremstyle{definition}
\newtheorem{example}[theorem]{Example}
\newtheorem{remark}[theorem]{Remark}
\numberwithin{equation}{section}
\newcommand{\N}{\mathbb{N}}                        
\newcommand{\C}{\mathbb{C}}                        
\newcommand{\R}{\mathbb{R}}                        
\newcommand{\K}{\mathbb{K}}                        
\newcommand{\uk}{\underline{k}}                    
\newcommand{\vp}{\varphi}                          
\newcommand{\FF}{\mathcal{F}}                      
\newcommand{\NN}{\mathfrak{N}}                     
\newcommand{\OO}{\mathcal{O}}                      
\newcommand{\OXxi}{{\mathcal{O}}_{X,\xi}}          
\newcommand{\OYeta}{{\mathcal{O}}_{Y,\eta}}        
\newcommand{\OZzeta}{{\mathcal{O}}_{Z,\zeta}}      
\newcommand{\Xxi}{X_{\xi}}                         
\newcommand{\Yeta}{Y_{\eta}}                       
\newcommand{\vpxi}{\vp_{\xi}}                      
\newcommand{\fbd}{\mathrm{fbd}}                    
\newcommand{\antens}{\tilde{\otimes}}              
\newcommand{\tensR}{\tilde{\otimes}_R}             
\newcommand{\pp}{\mathfrak{p}}                     
\newcommand{\qq}{\mathfrak{q}}                     
\newcommand{\mm}{\mathfrak{m}}                     
\newcommand{\Spec}{\mathrm{Spec}\,}                
\newcommand{\supp}{\mathrm{supp}\,}                
\newcommand{\inexp}{\mathrm{exp}\,}                
\begin{document}

\title[A fast flatness testing criterion in characteristic zero]{A fast flatness testing criterion\\ in characteristic zero}

\author{Janusz Adamus}
\address{Department of Mathematics, The University of Western Ontario, London, Ontario, Canada N6A 5B7 -- and --
         Institute of Mathematics, Polish Academy of Sciences, ul. {\'S}niadeckich 8, 00-956 Warsaw, Poland}
\email{jadamus@uwo.ca}
\author{Hadi Seyedinejad}
\address{Department of Mathematics, The University of Western Ontario, London, Ontario, Canada N6A 5B7}
\curraddr{Department of Mathematical Sciences, University of Kashan, Kashan, Iran}
\email{seiiedine@kashanu.ac.ir}
\thanks{J. Adamus's research was partially supported by the Natural Sciences and Engineering Research Council of Canada}

\keywords{flat, open, torsion-free, fibred product, vertical component}
\subjclass[2010]{32B99, 13C11, 14B25, 14P99, 26E05, 32H99, 32S45, 13B10, 13P99}

\begin{abstract}
We prove a fast computable criterion that expresses non-flatness in terms of torsion:
Let $\vp:X\to Y$ be a morphism of real or complex analytic spaces and let $\xi$ be a point of $X$.
Let $\eta=\vp(\xi)\in Y$ and let $\sigma:Z\to Y$ be the blowing-up of $Y$ at $\eta$, with $\zeta\in\sigma^{-1}(\eta)$. Then $\vp$ is flat at $\xi$ if and only if the pull-back of $\vp$ by $\sigma$, $X\times_YZ\to Z$ has no torsion at $(\xi,\zeta)$; i.e., the local ring $\OO_{X\times_YZ,(\xi,\zeta)}$ is a torsion-free $\OZzeta$-module.
We also prove the corresponding result in the algebraic category over any field of characteristic zero.
\end{abstract}
\maketitle


\section{Introduction}
\label{sec:intro}

Flatness of a morphism $\vp:X\to Y$ of algebraic or analytic varieties is a fundamental property, which -- when present -- allows one to regard the fibres of $\vp$ as a family of varieties parametrized by a given variety $Y$. It is therefore interesting to know how to verify whether or not a given morphism is flat. However, determining flatness is, in general, a difficult task.
The purpose of the present paper is to give a criterion that expresses local flatness in terms of torsion-freeness and is easily computable using computer algebra.

Our flatness criterion (see Section~\ref{subsec:main-results}, below) asserts that non-flatness of $\vp$ at a point $\xi\in X$ can be detected by looking for torsion in the pullback of $\vp$ by the blowing-up of the target $Y$ at $\eta=\vp(\xi)$. In fact, our theorem is a more general criterion for flatness over $Y$ of a coherent sheaf of modules on $X$. The only restriction on $Y$ is that it be irreducible at the point $\eta$. Let $\K=\R$ or $\C$.

\begin{theorem}
\label{thm:1}
Let $\vp:X\to Y$ be a morphism of $\K$-analytic spaces, and let $\FF$ be a coherent sheaf of modules over $X$. Let $\xi$ be a point of $X$, and suppose that $Y$ is positive-dimensional and irreducible at $\eta=\vp(\xi)$.
Let $\sigma:Z\to Y$ be a local blowing-up of $\eta$ in $Y$, with $\zeta\in\sigma^{-1}(\eta)$, and let $z_{\mathrm{exc}}$ denote the local generator of the exceptional divisor at $\zeta$.
Then, $\FF_\xi$ is flat over $\OYeta$ if and only if $z_{\mathrm{exc}}$ is not a zerodivisor in $\FF_\xi\antens_{\OYeta}\OZzeta$.
\end{theorem}

(The analytic tensor product, denoted $\tensR$, is the coproduct in the category of local analytic $R$-algebras (see, e.g., \cite{GR} or \cite{ABM1}). For the basic facts and terminology on blowing-up, we refer the reader to \cite{Har}, \cite{H} or \cite{Hau}.) The simplicity of the above criterion from the computational point of view is best seen in Theorem~\ref{thm:alg-flat-regular}, below.
\medskip

The idea of expressing flatness in terms of zerodivisors dates back to Auslander's seminal paper \cite{Au}. Auslander showed that flatness of a finitely generated module over a regular local ring is equivalent to torsion-freeness of a sufficiently high tensor power of the module. In recent years, his theorem was extended to modules finite over an essentially finite-type morphism of schemes (or a holomorphic mapping of complex-analytic spaces): by Adamus, Bierstone and Milman \cite{ABM1} in the analytic and complex-algebraic categories (extending a special case done by Galligo and Kwieci{\'n}ski \cite{GK}), and by Avramov and Iyengar \cite{AI} in the category of schemes smooth over a field.

All of the above three generalizations follow the philosophy of Auslander's proof, and consequently all share the same limitations for practical application. First of all, they require the base ring to be regular (or even smooth, in \cite{AI}). This in itself is not yet the most restrictive assumption. As the authors show in \cite{AS}, it is not difficult to generalize to the singular case (at least in the complex-analytic and complex-algebraic categories). More importantly, in general, the above criteria detect non-flatness of a module by finding torsion only in its $n$-fold tensor power, where $n$ is the Krull dimension of the base ring. In fact, this is the case already for finite modules: Auslander \cite{Au} shows an example of a non-flat module $F$ finitely generated over a regular local ring $R$ of dimension $n$ such that $F$ as well as all its tensor powers up to $(n-1)$'st are torsion-free over $R$. To put this in a perspective of actual calculations, consider the module from Example~\ref{ex:non-flat}, below: There we have $R=\C[y_1,y_2,y_3]$ and a non-flat $R$-module $F$ finitely generated over $R[x_1,\dots,x_9]$. To verify the non-flatness of $F$ by means of \cite{GK}, \cite{ABM1} or \cite{AI}, one would need to perform primary decomposition (cf. \cite[Rem.\,1.4]{ABM1}) of an ideal in $3+3\cdot9=30$ variables. This is, of course, practically impossible.
Last but not least, in the analytic category, the existing flatness criteria only hold over the field of complex numbers.
\medskip

Let us consider now the geometric point of view. Let $\vp:X\to Y$ denote a morphism of algebraic or analytic varieties, with $Y$ smooth of dimension $n$, and $\vp(\xi)=\eta$. The general philosophy of \cite{ABM1} and \cite{AI} is that the non-flatness of $\vp$ at the point $\xi$ means that the fibre $\vp^{-1}(\eta)$ is in some sense bigger than the generic fibre of $\vp$. Passing to fibred powers of $\vp$ (which corresponds to taking tensor powers of the local ring $\OXxi$ of the source over $\OYeta$) amplifies the difference between the special and the generic fibre of the mapping to the extent that in the $n$-fold fibred power the special fibres themselves form an irreducible component of the source. (This component is responsible for the $\OYeta$-torsion in the $n$-fold tensor power of $\OXxi$.)

Here we take a different approach. The main idea behind our results is that the ``bigness'' of the fibre $\vp^{-1}(\eta)$ can be amplified much quicker. Namely, by taking fibred product with a morphism $\sigma:Z\to Y$ with generically finite fibres, whose fibre over $\eta$ is of codimension $1$ in $Z$. This technique was introduced by Hironaka in the proof of his local flattening theorem \cite[\S\,4, Thm.\,2]{H}. In fact, both our flateness criterion and Hironaka's flattening by shrinking of the special fibre follow from a more general Theorem~\ref{thm:main}, below.
\medskip

The main tools of this paper are Hironaka's diagram of initial exponents and the local flattener (recalled in Section~\ref{sec:toolbox}). Consequently, it is convenient to first prove our criterion in the language of local analytic algebras (over $\K=\R$ or $\C$), and later translate it into the analytic-geometric category. We then derive the corresponding result in the algebraic setting by standard faithfull flatness arguments.

\subsection{Main results}
\label{subsec:main-results}

Let $\K=\R$ or $\C$. Let $\vp:X\to Y$ be a morphism of $\K$-analytic spaces, and let $\FF$ be a coherent sheaf of modules over $X$.
Our main result asserts that non-flatness of $\FF$ at a point $\xi\in X$ can be detected by torsion in the pull-back of $\vp$ by a blowing-up of $Y$ at any local subspace $N$ at $\vp(\xi)$ with the property that $F_\xi$ is flat over $\OO_{N,\vp(\xi)}$. More precisely, we have the following.

\begin{theorem}
\label{thm:main}
Let $\vp:X\to Y$ be a morphism of $\K$-analytic spaces, and let $\FF$ be a coherent sheaf of modules over $X$. Let $\xi\in X$, and suppose that $\FF_\xi$ is not flat as an $\OYeta$-module, where $\eta=\vp(\xi)$. Let $N$ be a $\K$-analytic subspace of an open neighbourhood of $\eta$ in $Y$ such that $\eta\in N$ and $\OO_{N,\eta}\antens_{\OYeta}\FF_\xi$ is flat as an $\OO_{N,\eta}$-module. Let $\sigma:Z\to Y$ denote the local blowing-up of $N$ in $Y$, and let $\zeta\in\sigma^{-1}(\eta)$. Then the local generator $z_{\mathrm{exc}}$ of the exceptional divisor at $\zeta$ is a zerodivisor in $\OZzeta\antens_{\OYeta}\FF_\xi$.
\end{theorem}

\subsubsection*{Proof of Theorem~\ref{thm:1}}
The ``if'' direction of Theorem~\ref{thm:1} follows from Theorem~\ref{thm:main} applied with $N=\{\eta\}$, since every module is flat over a field. By irreducibility of $Y$ at $\eta$, the exceptional divisor (germ) $\{z_\mathrm{exc}=0\}$ contains no component of the strict transform (germ) of $Y$, hence $z_\mathrm{exc}$ is not a zerodivisor in $\OZzeta$. Therefore the ``only if'' direction is a consequence of the following simple remark and the fact that flatness is preserved by analytic base change (see, e.g., \cite[\S\,6, Prop.\,8]{H}).\qed

\begin{remark}
\label{rem:flat-tor-free}
Any flat $R$-module $M$ is torsion-free over $R$ (that is, a nonzero $r\in R$ is a zerodivisor in $M$ iff it is a zerodivisor in $R$). Indeed, this follows from the characterisation of flatness in terms of relations (see, e.g., \cite[Cor.\,6.5]{Eis}).
\end{remark}

Next, we would like to formulate a geometric variant of Theorem~\ref{thm:1} in the complex-analytic setting.
For that, we need to introduce the notion of vertical component.
Let, as above, $\vpxi:\Xxi\to\Yeta$ be a morphism of germs of $\K$-analytic spaces, and let $W_\xi$ denote an irreducible component of $\Xxi$ (isolated or embedded). Recall (\cite{ABM1}) that $W_\xi$ is called an \emph{algebraic} (resp. \emph{geometric}) \emph{vertical component} of $\vpxi$ (or over $\Yeta$) if $\vpxi$ maps $W_\xi$ to a proper analytic (resp. nowhere-dense) subgerm of $\Yeta$.
(More precisely, for a sufficiently small representative $W$ of $W_\xi$, and a corresponding representative $\vp$ of $\vpxi$, the germ $\vp(W)_\eta$ is a proper analytic (resp. nowhere-dense) subgerm of $\Yeta$.) A component of $\Xxi$ is isolated (resp. embedded) if its defining prime ideal in the local ring $\OXxi$ is an isolated (resp. embedded) prime.

\begin{corollary}
\label{cor:flat-cplx-map}
Let $\vp:X\to Y$ be a morphism of complex-analytic spaces, with $Y$ irreducible at a point $\eta$. Let $\sigma:Z\to Y$ denote the blowing-up of $\eta$ in $Y$. Let $\xi\in\vp^{-1}(\eta)$ and $\zeta\in\sigma^{-1}(\eta)$. Then, the map germ $\vpxi$ is flat if and only if its pull-back by $\sigma_\zeta$, $(X\times_YZ)_{(\xi,\zeta)}\to Z_\zeta$ has no algebraic vertical components. (Equivalently, the induced map germ $(X\times_YZ)_{(\xi,\zeta)}\to\Yeta$ has no algebraic vertical components.)
\end{corollary}

Corollary~\ref{cor:flat-cplx-map} follows from Theorem~\ref{thm:1} applied to $\FF=\OO_X$. Indeed, there is an isomorphism $\OXxi\antens_{\OYeta}\OZzeta\cong\OO_{X\times_YZ,(\xi,\zeta)}$, and every associated prime $\qq\in\OYeta$ of the $\OYeta$-module $\OO_{X\times_YZ,(\xi,\zeta)}$ is of the form $\qq=\pp\cap\OYeta$ for some associated prime $\pp$ of the ring $\OO_{X\times_YZ,(\xi,\zeta)}$. Therefore, $\OXxi\antens_{\OYeta}\OZzeta$ has a zerodivisor over $\OYeta$ if and only if some associated prime of $\OO_{X\times_YZ,(\xi,\zeta)}$ contracts to a non-zero ideal in $\OYeta$. In other words, the corresponding irreducible component of $X\times_YZ$ through $(\xi,\zeta)$ is mapped to a proper analytic subgerm of $Y_\eta$.

\begin{remark}
\label{rem:flat-map-case}
It is interesting to compare the above result (in the case of non-singular $Y$) with Theorem\,1.1 of \cite{ABM1}, where the non-flatness of $\vpxi$ is detected in the $n$-fold fibred power $\vp^{\{n\}}_{\xi^{\{n\}}}$. There, the characterisation is in terms of the \emph{geometric} vertical components, and it is actually an open problem (\cite[Question\,1.11]{ABM1}) whether it can be stated in terms of the algebraic vertical components as well. Clearly, every algebraic vertical component over an irreducible target is geometric vertical, but the converse is not true, in general (see, e.g., \cite{A1}).
\end{remark}
\medskip

Finally, let us state an algebraic analogue of (the exciting part of) Theorem~\ref{thm:1}. Here, as before, $\K=\R$ or $\C$.

\begin{theorem}
\label{thm:alg-flat-singular}
Let $R=\K[y_1,\dots,y_n]/I$, where $I$ is a proper ideal in $\K[y_1,\dots,y_n]$. Let $A=R[x_1,\dots,x_m]/Q$ be an $R$-algebra of finite type, and let $F$ be a finitely generated $A$-module.
Let $S=\K[z_1,\dots,z_n]$ and let $\kappa:\K[y_1,\dots,y_n]\to S$ be the morphism defined as
\[
\kappa(y_1)=z_1z_n, \dots, \ \kappa(y_{n-1})=z_{n-1}z_n, \ \kappa(y_n)=z_n\,.
\]
Let $I^*$ be the strict-transform ideal of $I$; i.e,
\[
I^*=\{g\in S: \ z_n^k\!\cdot\!g\in I\!\cdot\!S\ \mathrm{for\ some\ }k\in\N\}\,.
\]
Suppose that $I^*$ is a proper ideal in $S$. Then
$F_{(x,y)}$ is a flat $R_{(y)}$-module if $y_n$ is not a zerodivisor in $F_{(x,y)}\otimes_{R_{(y)}}S/I^*_{(z)}$.
\end{theorem}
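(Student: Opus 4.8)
The plan is to reduce the assertion to the analytic statement already established in Theorem~\ref{thm:flat-singular}(i), via faithfully flat descent through analytification --- the faithfully flat argument announced in the introduction. Write $\mathcal{R}=\K\{y\}/I\K\{y\}$, $\mathcal{A}=\K\{y,x\}/(I+Q)\K\{y,x\}$, and $\mathcal{F}=F_{(x,y)}\otimes_{A_{(x,y)}}\mathcal{A}$; these are the analytic local rings (resp. analytic module) at the origin associated to $R_{(y)}$, $A_{(x,y)}$, $F_{(x,y)}$, so $\mathcal{F}$ is an analytic $\mathcal{R}$-module and the canonical maps $R_{(y)}\to\mathcal{R}$, $A_{(x,y)}\to\mathcal{A}$ are faithfully flat. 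Put $\mathcal{S}=\K\{z\}$ and let $\kappa:\K\{y\}\to\mathcal{S}$ be the morphism of Theorem~\ref{thm:flat-regular}. First I would check that the analytic strict transform $\mathcal{I}^{*}=\{g\in\mathcal{S}:z_{n}^{k}g\in I\cdot\mathcal{S}\text{ for some }k\in\N\}$ coincides with the extension $I^{*}\cdot\mathcal{S}$ of the algebraic strict transform: indeed $\K[z]\to\mathcal{S}$ is flat, saturation commutes with flat base change (the ascending chain $I\cdot\K[z]:z_{n}^{k}$ stabilises, and extension of ideals commutes with the resulting directed union), and hence $\mathcal{S}/\mathcal{I}^{*}$ is the analytification of $(S/I^{*})_{(z)}$ --- a \emph{proper} local ring, by the hypothesis on $I^{*}$ (understood, as in Remark~\ref{rem:strict-trans1}, so that the strict transform passes through the origin of the chart, i.e. $I^{*}\subseteq(z)\K[z]$).

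The argument would then proceed in three steps. \emph{Descent of flatness.} Since $R_{(y)}\to\mathcal{R}$ and $A_{(x,y)}\to\mathcal{A}$ are faithfully flat and $\mathcal{A}$ is the relative analytification of $A_{(x,y)}$ over $R_{(y)}$, the standard faithfully flat descent of flatness (flat base change for $\mathrm{Tor}$ together with faithfulness) shows that $F_{(x,y)}$ is $R_{(y)}$-flat if and only if $\mathcal{F}$ is $\mathcal{R}$-flat. \emph{Transfer of the torsion hypothesis.} Set $D=A_{(x,y)}\otimes_{R_{(y)}}(S/I^{*})_{(z)}$ and $P=F_{(x,y)}\otimes_{R_{(y)}}(S/I^{*})_{(z)}=F_{(x,y)}\otimes_{A_{(x,y)}}D$, the module on which $y_{n}$ is assumed to be a non-zerodivisor. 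Identifying the analytic tensor product $\mathcal{A}\tensR(\mathcal{S}/\mathcal{I}^{*})$ with the analytic local ring $\mathcal{D}$ of $\Spec D$ at the maximal ideal $(z,x)$ --- which contracts to $(x,y)\subset A$ under $\kappa$, and which is the unique prime of $A\otimes_{R}(S/I^{*})$ over $(x,y)$ and $(z)$, since the corresponding fibre is a single $\K$-rational point --- realises $\mathcal{D}$ as a localization--analytification of $D$, in particular a flat $D$-algebra, and gives $\mathcal{F}\tensR(\mathcal{S}/\mathcal{I}^{*})=\mathcal{F}\otimes_{\mathcal{A}}\mathcal{D}=P\otimes_{D}\mathcal{D}$. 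Flatness of $D\to\mathcal{D}$ then yields $\Ann_{P\otimes_{D}\mathcal{D}}(y_{n})=\Ann_{P}(y_{n})\otimes_{D}\mathcal{D}=0$, so $y_{n}$ is not a zerodivisor on $\mathcal{F}\tensR(\mathcal{S}/\mathcal{I}^{*})$. \emph{Conclusion.} Applying Theorem~\ref{thm:flat-singular}(i) to the analytic $\mathcal{R}$-module $\mathcal{F}$ (note that $\Specan\mathcal{R}$ is positive-dimensional, as is forced by $\mathcal{I}^{*}$ being proper) in contrapositive form gives that $\mathcal{F}$ is $\mathcal{R}$-flat, whence $F_{(x,y)}$ is $R_{(y)}$-flat by the first step.

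I expect the one genuinely delicate point to be the identification, in the middle step, of $\mathcal{A}\tensR(\mathcal{S}/\mathcal{I}^{*})$ with the analytification of the algebraic fibred product $D$, together with the bookkeeping of exactly which maximal ideal of $A\otimes_{R}(S/I^{*})$ one localizes at --- the one lying over $(x,y)\subset A$ and $(z)\subset S/I^{*}$, which is available precisely because $\kappa$ becomes a local homomorphism once the coordinate chart of the blowing-up is chosen so that the strict transform passes through the origin. This is the standard compatibility of the analytic tensor product with fibred products of finite-type objects (cf. \cite{ABM1}, \cite{GR}), but it is exactly where the choice of chart and the properness hypothesis on $I^{*}$ enter the argument, so it must be verified rather than waved through. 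The remaining ingredients --- the $\mathrm{Tor}$ computation underlying the descent of flatness, the behaviour of annihilators under the flat map $D\to\mathcal{D}$, and the commutation of saturation with analytification --- are routine.
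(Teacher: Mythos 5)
Your proposal is correct and follows essentially the same route as the paper's own proof: reduce to the analytic statement (the paper routes through Theorem~\ref{thm:flat-singular-map}, which is itself an immediate reformulation of Theorem~\ref{thm:flat-singular}) via faithful flatness of completion/analytification, which is exactly the content of the paper's Remark~\ref{rem:ff}. You supply more explicit bookkeeping --- the identification $\mathcal{I}^{*}=I^{*}\cdot\mathcal{S}$, the choice of maximal ideal in the fibred product, and the flat base change for annihilators --- which the paper compresses into two invocations of Remark~\ref{rem:ff}, so your write-up is, if anything, more careful on the compatibilities than the published argument.
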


\begin{remark}
\label{rem:weak-assumptions}
Notice the weakness of assumptions on $R$. In particular, the above criterion allows one to verify flatness of modules over a much larger class of rings than that in \cite[Thm.\,4.1]{AS}.
\end{remark}

In the case when the defining ideal $I$ of $R$ above is zero, Theorem~\ref{thm:alg-flat-singular} simplifies to the following criterion which holds over an arbitrary field of characteristic zero.

\begin{theorem}
\label{thm:alg-flat-regular}
Let $\uk$ be a field of characteristic zero and let $R=\uk[y_1,\dots,y_n]$.
Let $F$ be a module finitely generated over an $R$-algebra of finite type, say, $F\cong R[x]^q/M$, where $x=(x_1,\dots,x_m)$, $m\geq1$, and $M$ is a submodule of $R[x]^q$. Set $\widetilde{M}:=M(y_1y_n,\dots,y_{n-1}y_n,y_n,x)$, i.e., let $\widetilde{M}$ be the module obtained from $M$ by substituting $y_jy_n$ for $y_j$, $j=1,\dots,n-1$.
Then $F_{(x,y)}$ is a flat $R_{(y)}$-module if and only if \ $\widetilde{M}=\widetilde{M}:y_n$ (as $R[x]$-submodules of $R[x]^q$).
\end{theorem}

\subsection{Plan of the paper}

The rest of the paper is structured as follows: In Section~\ref{sec:toolbox}, we briefly recall Hironaka's combinatorial approach to flatness. We follow there the excellent exposition of \cite{BM}. Hironaka's apparatus is an essential component of the proof of our main result, Theorem~\ref{thm:main}. The latter is proved in Section~\ref{sec:flatness}.

In Section~\ref{sec:openness}, we prove a topological analogue of Theorem~\ref{thm:1} -- a criterion for (local) openness of a holomorphic mapping between complex-analytic spaces. Like our flatness criterion above, Theorem~\ref{thm:open} is superior to the known effective openness criteria (see \cite{A1} and \cite{ABM2}) from the computational point of view. On the other hand, Theorem~\ref{thm:open} requires an additional assumption that the source of the mapping be pure-dimensional. Example~\ref{rem:false-in-general} proves the necessity of this assumption.

Finally, in the last section, we give the proofs of our algebraic criteria, Theorems~\ref{thm:alg-flat-regular} and~\ref{thm:alg-flat-singular}. Roughly speaking, these follow from Theorem~\ref{thm:main} by faithfull flatness of completions of the rings of polynomials and the rings of convergent power series over the base ring. In Section~\ref{sec:alg}, we also give an example of an explicit calculation of non-flatness, showing Theorem~\ref{thm:alg-flat-regular} at work.

\section{Diagram of initial exponents and flatness}
\label{sec:toolbox}

Let $\K=\R$ or $\C$. Let $R=\K\{y\}/I$ be a local analytic $\K$-algebra with the maximal ideal $\mm$, where $y=(y_1,\dots,y_n)$ and $I\subset\K\{y\}$ is a proper ideal.
Let $x=(x_1,\dots,x_m)$ and define $R\{x\}:=\K\{y,x\}/I\!\cdot\!\K\{y,x\}$. Given $\beta=(\beta_1,\dots,\beta_m)\in\N^m$ and a positive integer $q$, we will denote by $x^\beta$ the monomial $x_1^{\beta_1}\dots x_m^{\beta_m}$, and by $x^{\beta,j}$ the $q$-tuple $(0,\dots,x^\beta,\dots,0)$ with $x^\beta$ in the $j$'th place. Then, a $q$-tuple $G=(G_1,\dots,G_q)\in R\{x\}^q$ can be written as $G=\sum_{\beta,j}g_{\beta,j}x^{\beta,j}$, for some $g_{\beta,j}\in R$, where the indices $(\beta,j)$ belong to $\N^m\times\{1,\dots,q\}$.

The mapping $R=\K\{y\}/I\to\K$, $g\mapsto g(0)$ of evaluation at zero, given by tensoring with $\otimes_R R/\mm$, induces the evaluation mapping
\[
R\{x\}^q\to\K\{x\}^q,\quad G=\sum_{\beta,j}g_{\beta,j}x^{\beta,j}\mapsto G(0)=\sum_{\beta,j}g_{\beta,j}(0)x^{\beta,j}\,.
\]
For a submodule $M$ of $R\{x\}^q$, we will denote by $M(0)$ the image of $M$ under the evaluation mapping.

Let $L$ be any positive linear form on $\R^m$, $L(\beta)=\sum_{i=1}^m\lambda_i\beta_i$ ($\lambda_i>0$). We define a total ordering of $\N^m\times\{1,\dots,q\}$ (denoted by $L$ again) by lexicographic ordering of the $(m+2)$-tuples $(L(\beta),j,\beta_1,\dots,\beta_m)$, where $\beta=(\beta_1,\dots,\beta_m)$ and $(\beta,j)\in\N^m\times\{1,\dots,q\}$.

For a $q$-tuple $G=\sum_{\beta,j}g_{\beta,j}x^{\beta,j}\in R\{x\}^q$, define the \emph{support} of $G$ as
\[
\supp(G)=\{(\beta,j):g_{\beta,j}\neq0\}\,.
\]
Similarly, for the evaluated $q$-tuple $G(0)$, we define
\[
\supp(G(0))=\{(\beta,j):g_{\beta,j}(0)\neq0\}\,.
\]
Of course, $\supp(G(0))\subset\supp(G)$.
The \emph{initial exponent} of $G(0)$, denoted $\inexp_L(G(0))$, is the minimum (with respect to $L$) over all $(\beta,j)$ in $\supp(G(0))$.

Given a submodule $M$ of $R\{x\}^q$, we will be interested in the \emph{diagram of initial exponents} of its evaluation at $0$ (with respect to $L$), defined as
\[
\NN_L(M(0)):=\{\inexp_L(G(0)):G\in M \mathrm{\ with\ }G(0)\neq0\} \ \subset \ \N^m\times\{1,\dots,q\}\,.
\]
Hironaka's flatness criterion (see \cite[Thm.\,7.9]{BM}) asserts that an $R$-module $R\{x\}^q/M$ is flat if and only if, for every $G\in M$, \ $\supp(G)\cap\NN_L(M(0))=\varnothing$ \,implies that \,$G=0$.
\smallskip

We will also need the following.

\begin{theorem}[{Hironaka, cf.\,\cite[Thm.\,7.12]{BM}}]
\label{thm:flattener}
Let $\vp:X\to Y$ be a morphism of $\K$-analytic spaces, with $\xi\in X$ and $\eta=\vp(\xi)\in Y$, and let $F$ be a finitely generated $\OXxi$-module. Then there exists a unique germ of a local $\K$-analytic subspace $\Sigma$ of $Y$ (i.e., a unique local analytic $\K$-algebra $\OO_{\Sigma,\eta}$ which is a quotient of $\OYeta$) such that:
\begin{itemize}
\item[(i)] $\OO_{\Sigma,\eta}\antens_{\OYeta}F$ is $\OO_{\Sigma,\eta}$-flat.
\item[(ii)] If $\iota:\Sigma\to Y$ is the inclusion, then for every morphism germ $\psi_\theta:T_\theta\to Y_\eta$ such that $\OO_{T,\theta}\antens_{\OYeta}F$ is $\OO_{T,\theta}$-flat there exists a unique morphism germ $\chi_\theta:T_\theta\to\Sigma_\eta$ such that $\psi_\theta=\iota_\eta\circ\chi_\theta$.
\end{itemize}
\end{theorem}

The unique germ $\Sigma_\eta$ of Theorem~\ref{thm:flattener} is called the \emph{flattener} of $F$, and its defining ideal is the \emph{flattener ideal} of $F$.

\section{Proof of Theorem~\ref{thm:main}}
\label{sec:flatness}

Let $\vp:X\to Y$, $\xi\in X$, $\eta=\vp(\xi)$, $\FF$, $N$, and $\sigma:Z\to Y$ be as in the statement of the theorem.
Let $n,k\geq1$ be such that $\OYeta$ (resp. $\OZzeta$) is isomorphic with a quotient of $\K\{y\}$ (resp. $\K\{z\}$), where $y=(y_1,\dots,y_n)$ and $z=(z_1,\dots,z_k)$.

Given a morphism $\vpxi:\Xxi\to\Yeta$ of germs of $\K$-analytic spaces, there is an open neighbourhood $V$ of $\xi$ in $X$ and a positive integer $m$ such that $V$ is a $\K$-analytic subspace of $\K^m$. Then $\vpxi$ factors as $(\pi_Y\circ\iota)_\xi$, where $\iota$ denotes the isomorphism $V\stackrel{\cong}{\rightarrow}\Gamma_{\vp|_V}\subset\K^m\times Y$ with the graph of $\vp$, and $\pi_Y:\K^m\times Y\to Y$ is the projection. It follows that $\OXxi$ can be identified with a quotient of $\OYeta\{x\}$, where $x=(x_1,\dots,x_m)$.

Consequently, $\FF_\xi$ being a finite $\OXxi$-module, we can write $\FF_\xi=\OYeta\{x\}^q/M$, where $p,q\geq1$ and $M$ is the image of a homomorphism of $\OYeta\{x\}$-modules $\OYeta\{x\}^p\stackrel{\Phi}{\rightarrow}\OYeta\{x\}^q$.
Then, by the right-exactness of tensor product and using the isomorphism $\OZzeta\antens_{\OYeta}\OYeta\{x\}\stackrel{\cong}{\rightarrow}\OZzeta\{x\}$, we can identify $\OZzeta\antens_{\OYeta}\FF_\xi$ with $\OZzeta\{x\}^q/M^Z$, where $M^Z$ denotes the image of $\Phi$ viewed as a homomorphism between $\OZzeta\{x\}^p$ and $\OZzeta\{x\}^q$.

Let $L$ be a positive linear form on $\R^m$. Let $M(0)$ (resp. $M^Z(0)$) denote the submodule of $\K\{x\}^q$ obtained by evaluating at zero the $y$-variables in $M$ (resp. the $z$-variables in $M^Z$). One readily sees that their diagrams of initial exponents are equal:
\[
\NN_L(M(0))=\NN_L(M^Z(0))\,.
\]
Indeed, this follows from the fact that $\sigma^*_\zeta:\OYeta\to\OZzeta$ is a local homomorphism.
Set
\[
H:=\{F\in M:\supp(F)\cap\NN_L(M(0))=\varnothing\}
\]
and
\[
H^Z:=\{G\in M^Z:\supp(G)\cap\NN_L(M^Z(0))=\varnothing\}\,.
\]
Each $F\in H$ (resp. $G\in H^Z$) can be written as $F=\sum_{(\beta,j)\notin\NN_L(M(0))}f_{\beta,j}x^{\beta,j}$ with $f_{\beta,j}\in\OYeta$ (resp. $G=\sum_{(\beta,j)\notin\NN_L(M^Z(0))}g_{\beta,j}x^{\beta,j}$ with $g_{\beta,j}\in\OZzeta$).
Define $P$ (resp. $P^Z$) as the ideal in $\OYeta$ (resp. in $\OZzeta$) generated by all the coefficients $f_{\beta,j}$ of all $F\in H$ (resp. all the $g_{\beta,j}$ over all $G\in H^Z$). It follows that $P^Z=P\cdot\OZzeta$ (i.e., $P^Z$ is the extension of $P$ by $\sigma^*_\zeta:\OYeta\to\OZzeta$).

By the proof of \cite[Thm.\,7.12]{BM}, $P$ is precisely the flattener ideal of $\FF_\xi$. Let $\Sigma$ denote a local $\K$-analytic subspace of $Y$ at $\eta$ such that $\OO_{\Sigma,\eta}=\OYeta/P$. By definition of the flattener, we have $N_\eta\subset\Sigma_\eta$ or, equivalently, $Q\supset P$, where $Q$ is the ideal in $\OYeta$ for which $\OO_{N,\eta}=\OYeta/Q$.

Now, as $\sigma:Z\to Y$ is the (local) blowing-up with centre $N$, it follows that the ideal $Q\cdot\OZzeta$ is generated by a single element, namely $z_\mathrm{exc}$. We thus have
\begin{equation}
\label{eq:exc}
(z_\mathrm{exc})\cdot\OZzeta\supset P\cdot\OZzeta=P^Z\,.
\end{equation}
By restricting to small neighbourhoods of $\zeta$ and $\eta$ (in $\K^k$ and $\K^n$, respectively), we can regard $\sigma:Z\to Y$ as a restriction of the blowing-up of $N$ in $\K^n$ to the strict transform $Z$ of $Y$. Then, since the exceptional divisor of $\sigma$ is locally a coordinate hypersurface, we can assume that $z_\mathrm{exc}$ is one of the coordinates at $\zeta$ (of the (local) ambient space $\K^k$ of $Z$ at $\zeta$), say, $z_\mathrm{exc}=z_k$. Note that no power of $z_\mathrm{exc}$ is zero in $\OZzeta$, and so we can identify $z_k$ with its class in $\OZzeta$.
\medskip

The ideal $P^Z$ is non-zero, since $\OZzeta\antens_{\OYeta}\FF_\xi$ is not $\OZzeta$-flat, by Theorem~\ref{thm:flattener}(ii) (see \cite[\S\,2]{AS} for a detailed argument). Hence, we can choose a non-zero $G^*\in H^Z$. Write $G^*=\sum g_{\beta,j}x^{\beta,j}$, with $g_{\beta,j}\in\OZzeta$. By \eqref{eq:exc} and definition of $P^Z$, each $g_{\beta,j}$ is divisible by $z_k$. Set
\[
d:=\max\{l\in\N:g_{\beta,j}\in(z_k)^l\!\cdot\!\OZzeta\mathrm{\ for\ all\ }(\beta,j)\in\supp(G^*)\}\,,
\]
and choose $(\beta^*,j^*)\in\supp(G^*)$ such that $g_{\beta^*,j^*}\in(z_k)^d\setminus(z_k)^{d+1}$.
We can now define
\begin{equation}
\label{eq:tilde}
\tilde{G}:=z_k^{-d}\cdot G^*\in\OZzeta\{x\}^q\,.
\end{equation}
Note that $\tilde{G}\notin M^Z$. Indeed, we have $\supp(\tilde{G})=\supp(G)$, hence $\supp(\tilde{G})\cap\NN_L(M^Z(0))=\varnothing$.
Thus, if $\tilde{G}=\sum \tilde{g}_{\beta,j}x^{\beta,j}$ belonged to $M^Z$ then all its coefficients $\tilde{g}_{\beta,j}$ would be in $P^Z$. But $\tilde{g}_{\beta^*,j^*}$ is not in $(z_k)\cdot\OZzeta$, by construction, and hence not in $P^Z$, by \eqref{eq:exc}.

On the other hand, $G^*$ was chosen from $M^Z$, and so $z_\mathrm{exc}^d$ (hence also $z_\mathrm{exc}$) is a zerodivisor in $\OZzeta\{x\}^q/M^Z$, by \eqref{eq:tilde}. This completes the proof of the theorem.
\qed

\section{Openness criterion}
\label{sec:openness}

Let $\vp:X\to Y$ be a morphism of complex-analytic spaces, with $\xi\in X$ and $\eta=\vp(\xi)\in Y$. Assume that $X$ is of pure dimension, $Y$ is positive-dimensional and locally irreducible. Let $\sigma:Z\to Y$ denote the (local) blowing-up of $\eta$ in $Y$, and let $\zeta\in\sigma^{-1}(\eta)$. The following is a topological analogue of Theorem~\ref{thm:1}.

\begin{theorem}
\label{thm:open}
The map $\vp$ is open at $\xi\in X$ if and only if its pull-back $X\times_YZ\to Z$ has no isolated algebraic vertical component at $(\xi,\zeta)$. (Equivalently, the induced map $X\times_YZ\to Y$ has no isolated algebraic vertical component at $(\xi,\zeta)$.)
\end{theorem}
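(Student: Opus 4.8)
The plan is to follow the same strategy that underlies Theorem~\ref{thm:flat-singular-map}, replacing the flatness criterion of Hironaka by the openness criterion expressed through vertical components. Recall that a holomorphic map $\vp:X\to Y$ with $X$ pure-dimensional and $Y$ locally irreducible is open at $0$ if and only if $\dim_0 \vp^{-1}(0) = \dim_0 X - \dim_0 Y$; equivalently, if and only if the germ $X_0$ has no component (necessarily isolated, since $X$ is pure-dimensional) that is mapped into a nowhere-dense subgerm of $Y_0$ — that is, no isolated geometric vertical component, which over the locally irreducible target $Y_0$ coincides with isolated algebraic vertical component. So first I would reduce the statement to: \emph{$\vp^{-1}(0)$ has the expected dimension at $0$ if and only if $(X\times_Y Z)_{(0,0)}$ has no isolated algebraic vertical component over $Z_0$}. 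The equivalence of the ``over $Z$'' and ``over $Y$'' formulations is handled exactly as in Remark~\ref{rem:flat-map-case}: associated primes of $\OO_{X\times_Y Z,(0,0)}$ contract under $(\sigma|_Z)^*_0$, and by Remark~\ref{rem:strict-trans2} the strict transform $Z$ passes through the origin with $(\sigma|_Z)^*_0$ a monomorphism, so a component contracts to a proper analytic subgerm of $Z_0$ iff its image contracts to a proper analytic subgerm of $Y_0$.

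The core of the argument is the comparison of fibre dimensions under the blowing-up. The key geometric observation is that $\sigma:Z\to Y$ is a proper bimeromorphic (hence generically one-to-one, quasi-finite on a dense open set) modification, with $\sigma^{-1}(0)\cap Z$ of codimension $1$ in $Z_0$; and that over a punctured neighborhood of $0$ it is an isomorphism. Forming the pull-back $\psi:X\times_Y Z\to Z$, one has on one hand that $\psi$ agrees with $\vp$ over the open set where $\sigma$ is an isomorphism, so the \emph{generic} fibre dimension of $\psi$ equals that of $\vp$, namely $\dim_0 X - \dim_0 Y = \dim_0 Z - \dim_0 Y' $ where I must track that $\dim_0 Z = \dim_0 Y$. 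On the other hand, the fibre $\psi^{-1}(0,0) = \vp^{-1}(0)\times_{Y}(\sigma^{-1}(0)\cap Z)$, and here is where the amplification happens: if $\vp^{-1}(0)$ is too big, i.e.\ $\dim_0\vp^{-1}(0) > \dim_0 X - \dim_0 Y$, then because $\sigma^{-1}(0)\cap Z$ has codimension exactly $1$ in $Z$ (so drops the target dimension by $1$ rather than by $\dim_0 Y$), the special fibre of $\psi$ acquires dimension strictly exceeding the generic fibre dimension of $\psi$; by upper semicontinuity of fibre dimension together with pure-dimensionality of $X$ (which passes to $X\times_Y Z$ over the locus where $\sigma$ is an isomorphism, and must be checked to give a pure-dimensional component through $(0,0)$ carrying the excess fibre), this forces an isolated component of $(X\times_Y Z)_{(0,0)}$ to sit entirely inside $\psi^{-1}(0,0)$ — an isolated algebraic vertical component. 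Conversely, if $\vp^{-1}(0)$ has the expected dimension, the same semicontinuity plus the codimension-$1$ bookkeeping shows $\psi^{-1}(0,0)$ also has the expected dimension, so by pure-dimensionality no component of the source can be crushed.

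The main obstacle I expect is the pure-dimensionality bookkeeping for $X\times_Y Z$: the fibred product need not be pure-dimensional even when $X$ is, because of the exceptional divisor, so one cannot simply invoke ``$X$ pure $\Rightarrow X\times_Y Z$ pure'' and read off the result from fibre-dimension semicontinuity. The correct way around this is to argue not with $X\times_Y Z$ globally but with the specific component(s) of $(X\times_Y Z)_{(0,0)}$ that dominate a component of $X_0$ under the projection to $X$; since $\sigma$ is bimeromorphic, each isolated component of $X_0$ lifts to a unique isolated component of $(X\times_Y Z)_{(0,0)}$ of the same dimension, and all isolated components of the fibred product arise this way (any component lying inside $\pi_X^{-1}(0)$ would map into the nowhere-dense fibre $\sigma^{-1}(0)$, which, combined with pure-dimensionality of $X$, can be excluded by a dimension count). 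On that sublocus the fibre-dimension argument runs cleanly. A secondary technical point, needed for the ``only if'' direction, is the necessity of the pure-dimensionality hypothesis on $X$, which the paper flags will be shown by Example~\ref{rem:false-in-general}; I would isolate exactly where in the above the purity of $X$ is used (namely, in excluding components of $X_0$ of too-small dimension that would vacuously produce a vertical component in the pull-back without $\vp$ being non-open) and cross-reference that example there.
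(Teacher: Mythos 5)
The overall strategy you sketch is the same as the paper's — reduce openness to fibre dimension via the Remmert Open Mapping Theorem and then compare dimensions in $X\times_Y Z$ — but the workaround you propose for the pure-dimensionality obstacle contains a genuine error that breaks the ``non-open $\Rightarrow$ vertical component'' direction. You claim that every isolated component of $(X\times_Y Z)_{(0,0)}$ is either the strict-transform lift of a component of $X_0$ or else lies inside $\pi_X^{-1}(0)=\{0\}\times(\sigma|_Z)^{-1}(0)$, the latter being ruled out by a dimension count. This dichotomy is false, and in fact it fails in exactly the situation the theorem is about. Take $X=\C^2$ with coordinates $(u,v)$, $Y=\C^2$, $\vp(u,v)=(u,uv)$, and $\sigma(z_1,z_2)=(z_1z_2,z_2)$. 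The only component of $(X\times_Y Z)_{(0,0)}$ is $\{u=0,\,z_2=0\}$, of dimension $2$. It is not a lift of the (unique, $2$-dimensional) component of $X_0$: it projects onto the curve $\{u=0\}\subset X$, not dominantly onto $X_0$. And it is not contained in $\pi_X^{-1}(0)$, which has dimension $1$. So it is precisely the sought-for vertical component, yet it falls outside both branches of your dichotomy; the ``sublocus'' on which you propose to run the semicontinuity argument simply misses it.

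The paper sidesteps this issue entirely with a cleaner decomposition. Write $X\times_Y Z=T\cup T'$, where $T'=(\sigma|_Z)^{-1}(0)\times\vp^{-1}(0)$ is the \emph{closed} analytic subset lying over the exceptional divisor and $T$ (the locus where $z_n\neq0$) is biholomorphic with the open subset $\vp^{-1}(Y\setminus\{y_n=0\})$ of $X$. Pure-dimensionality of $X$ gives $\dim T\leq\dim X$, while non-openness plus Remmert gives $\fbd_0\vp\geq\dim X-\dim_0Y+1$, whence
\[
\dim_{(0,0)}T'=\fbd_0\vp+(\dim_0 Y-1)\ \geq\ \dim X\ \geq\ \dim T\,.
\]
Therefore $\dim_{(0,0)}T'=\dim_{(0,0)}(X\times_Y Z)$, and since $T'$ is closed, a maximal-dimensional irreducible component of $(X\times_Y Z)_{(0,0)}$ must lie entirely inside $T'$; by construction such a component maps into the exceptional set in $Z$ and is therefore algebraic vertical over $Z_0$ and over $Y_0$. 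Note that one does not need $X\times_Y Z$ to be pure-dimensional, nor any classification of its components — the closedness of $T'$ together with the single inequality $\dim_{(0,0)}T'\geq\dim T$ does all the work. You should also replace the semicontinuity argument in the ``if'' direction by the paper's observation that openness is universal in the topological category: if $\vp_0$ is open then so is the pull-back $(X\times_Y Z)_{(0,0)}\to Z_0$, and an open map germ has no isolated algebraic vertical components (and then none over $Y_0$ either, since $\sigma|_Z$ is dominant). Finally, the parenthetical remark early in your plan that openness of $\vp_0$ is ``equivalently'' the absence of vertical components of $X_0$ itself is incorrect (the map $(u,v)\mapsto(u,uv)$ above has no vertical components yet is not open); this side remark should be deleted, though it does not affect your correctly stated reduction to fibre dimension.
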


Here, by a map $\vp$ \emph{open at a point $\xi$} we mean a map whose restriction to a certain open neighbourhood of $\xi$ is an open map. In other words, $\vpxi$ is a germ of an open map.

\begin{remark}
\label{rem:universally-open}
Note that in the category of continuous maps between topological spaces (hence also in the category of holomorphic maps between complex-analytic spaces), openness implies universal openness. That is, if $\vpxi:\Xxi\to\Yeta$ is an open germ and $\psi_\zeta:Z_\zeta\to\Yeta$ is an arbitrary map-germ, then the pull-back of $\vpxi$ by $\psi_\zeta$, $(X\times_YZ)_{(\xi,\zeta)}\to Z_\zeta$ is again open.
\end{remark}

\subsubsection*{Proof of Theorem~\ref{thm:open}}
If $\vpxi$ is open, then its pull-back by $\sigma_\zeta$, $(X\times_YZ)_{(\xi,\zeta)}\to Z_\zeta$ is also open, by Remark~\ref{rem:universally-open}. Therefore, $(X\times_YZ)_{(\xi,\zeta)}$ has no isolated algebraic vertical component over $Z_\zeta$, hence also over $Y_\eta$ (because $\sigma_\zeta$ is dominant).

Conversely, suppose that $\vpxi$ is not open. Then, by the Remmert Open Mapping Theorem (see \cite[{\S}V.6,\,Thm.\,2]{Loj}), we have $\fbd_\xi\vp>\dim{X}-\dim_\eta Y$, or
\begin{equation}
\label{eq:fbd}
\dim{X}\leq\dim_\eta Y-1+\fbd_\xi\vp\,,
\end{equation}
where $\fbd_x\vp$ denotes the fibre dimension of $\vp$ at a point $x$, $\dim_x\vp^{-1}(\vp(x))$.

The problem being local, we can assume that $Y$ is a subspace of $\C^n$ and $\sigma$ is the restriction of the blowing-up of $\eta=0$ in $\C^n$ to the strict transform $Z$ of $Y$. Further, assume that the local generator of the exceptional divisor is the coordinate $y_n$; i.e., in local coordinates $\sigma$ is defined as $y_n\mapsto y_n$, $y_j\mapsto y_jy_n$ for $j=1,\dots,n-1$.

Since $\sigma$ is a biholomorphism outside $\sigma^{-1}(\eta)$, we can write $X\times_YZ=T\cup T'$, where $T'=\sigma^{-1}(\eta)\times\vp^{-1}(\eta)$ and $T$ is biholomorphic with $\vp^{-1}(Y\setminus\{y_n=0\})$.
One readily sees that $\dim{T}\leq\dim{X}$ and $\dim_\zeta\sigma^{-1}(\eta)=\dim_\eta Y-1$. Therefore, by \eqref{eq:fbd},
\[
\dim{T}\leq\dim_\eta Y-1+\fbd_\xi\vp=\dim_{(\xi,\zeta)}T'\,.
\]
It follows that $\dim_{(\xi,\zeta)}T'=\dim_{(\xi,\zeta)}(X\times_YZ)$, and hence $T'$ must contain an isolated irreducible component of $X\times_YZ$ through $(\xi,\zeta)$. By definition of $T'$, such a component is mapped into $\sigma^{-1}(\eta)$ in $Z$, and so it is algebraic vertical (over $Z_\zeta$, as well as over $\Yeta$).
\qed
\medskip

In the algebraic setting, Theorem~\ref{thm:open} has the following analogue over an arbitrary field $\uk$ (cf. \cite[Thm.\,1.1]{ABM2}):

\begin{theorem}
\label{thm:open-alg}
Let $Y$ be a scheme of finite type over a field $\uk$, and let $\vp:X\to Y$ be a morphism which is locally of finite type.
Assume that $Y$ is normal and positive-dimensional and $X$ is of pure dimension.
Let $\mm$ be a closed point of $Y$, and let $\sigma:Z\to Y$ denote the blowing-up of $Y$ at $\mm$. Then $\vp$ is open at a point $\pp\in\vp^{-1}(\mm)$ if and only if the pullback of $\vp$ by $\sigma$, $X\times_YZ\to Z$ has no vertical irreducible components.
\end{theorem}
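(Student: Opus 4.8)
The plan is to derive Theorem~\ref{thm:open-alg} from the analytic openness criterion, Theorem~\ref{thm:open}, by the same faithfully-flat/completion philosophy used elsewhere in the paper, reducing first to the case $\uk=\C$ and then transporting the conclusion back along the various base changes. First I would localize: openness of $\vp$ at $\pp$, and the existence of a vertical component of $X\times_YZ\to Z$ through a point over $\pp$ and the center $\mm$, are both local properties on $Y$ near $\mm$ and on $X$ near $\pp$, so I may replace $Y$ by $\Spec R$ with $R=\OO_{Y,\mm}$ and $X$ by $\Spec A$ with $A$ an $R$-algebra of finite type. Note $R$ is normal, local, and positive-dimensional; the blowing-up $Z$ of $\mm$ is covered by the standard affine charts, and on each chart its coordinate ring is $S/I^\ast$ in the notation of Theorem~\ref{thm:alg-flat-singular}, so ``$X\times_YZ$ has a vertical component'' becomes a statement about components of $\Spec(A\otimes_R S/I^\ast)$ dominating a proper closed subset of some chart of $Z$.

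Next I would reduce to the complex-analytic setting. The subtlety is that Theorem~\ref{thm:open} is stated over $\C$ and requires $Y$ locally irreducible; here $Y$ is only assumed normal. Over $\C$, normality implies that $\OO_{Y,\mm}$ is analytically normal (excellence of rings of finite type over a field), hence its analytization $\OO_{Y^{an},\mm}$ is a normal domain, so $Y^{an}$ is locally irreducible at $\mm$; this is exactly the hypothesis of Theorem~\ref{thm:open}. For a general field $\uk$ of characteristic zero one passes to $\C$ by a base-field extension $\uk\hookrightarrow\C$ (after noting everything in sight is defined over a finitely generated subfield), using that normality, pure-dimensionality, fibre dimension, and the dimension/vertical-component count are all insensitive to such a faithfully flat extension; for $\uk$ of positive characteristic the statement is subtler, but the theorem as stated allows an arbitrary field, so I would in fact prove it by the purely schematic dimension count below, which never leaves the algebraic category.

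Indeed, the cleanest route — and the one I would actually carry out — mirrors the proof of Theorem~\ref{thm:open} verbatim in the schematic language, so no analytic reduction is needed at all. The forward direction is formal: if $\vp$ is open at $\pp$, then $\vp$ is universally open (openness of finite-type morphisms is stable under base change, e.g.\ by the fibre-dimension characterization of Chevalley), so the pull-back $X\times_YZ\to Z$ is open at every point over $\pp$; an open dominant-on-components morphism cannot have a component mapping into a nowhere-dense closed subset, so there is no vertical component. For the converse, suppose $\vp$ is not open at $\pp$. By the algebraic open-mapping theorem for morphisms to a normal base (Chevalley's criterion: over a universally catenary normal target, $\vp$ is open at $\pp$ iff $\dim_\pp\vp^{-1}(\mm)=\dim_\pp X-\dim_\mm Y$, using that $X$ is pure-dimensional), failure of openness forces $\dim_\pp\vp^{-1}(\mm)\geq\dim X-\dim_\mm Y+1$. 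Write $X\times_YZ=T\cup T'$ where $T'=\sigma^{-1}(\mm)\times_\mm\vp^{-1}(\mm)$ is the part lying over the exceptional divisor and $T$ is the strict transform, isomorphic to the preimage in $X$ of $Y$ minus the center; then $\dim T\leq\dim X$ while $\dim_{(\pp,\zeta)}T'=\dim\sigma^{-1}(\mm)+\dim_\pp\vp^{-1}(\mm)=(\dim_\mm Y-1)+\dim_\pp\vp^{-1}(\mm)\geq\dim X$. Hence some top-dimensional component of $X\times_YZ$ through $(\pp,\zeta)$ lies inside $T'$, and such a component maps into the exceptional divisor $\sigma^{-1}(\mm)\subsetneq Z$, which is a proper closed subscheme — i.e.\ it is a vertical component.

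The main obstacle is the open-mapping theorem in the schematic setting: I must be sure that Chevalley's openness criterion applies with exactly the hypotheses ``$Y$ normal, $X$ pure-dimensional.'' The point is that a finite-type morphism $\vp$ to a locally noetherian universally catenary target that is geometrically unibranch (in particular normal) is open at $\pp$ precisely when the fibre dimension at $\pp$ equals $\dim_\pp X-\dim_{\vp(\pp)}Y$; normality gives universal catenarity and unibranchness, and pure-dimensionality of $X$ is what lets one run the dimension bookkeeping uniformly across components. Once this is in hand, the dimension count above is routine, and the only remaining care is the bound $\dim T\leq\dim X$ — which holds because $T$ is locally (on a chart of $Z$) a closed subscheme of an open subscheme of $X\times\mathbb{A}^{n-1}$ cut out by the $n-1$ equations $y_j-z_jy_n$, so it is isomorphic to an open subscheme of $X$ — together with the equality of fibre dimensions $\dim\sigma^{-1}(\mm)=\dim_\mm Y-1$, which is the standard fact that the exceptional divisor of a blowing-up of a point on an $n$-dimensional normal scheme is $(n-1)$-dimensional.
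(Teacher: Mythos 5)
Your proposal takes essentially the same route as the paper: abandon the analytic reduction and instead mirror the proof of Theorem~\ref{thm:open} directly in the schematic category, using the two EGA~IV facts -- that over a normal (geometrically unibranch) locally noetherian target, openness is equivalent to universal openness (IV, 14.4.3), and the Chevalley-type open-mapping criterion for a pure-dimensional source over a normal base (IV, 14.4.6) -- together with the $T\cup T'$ decomposition of $X\times_YZ$ and the dimension count. One small imprecision worth fixing: the parenthetical ``openness of finite-type morphisms is stable under base change'' is false in general for schemes (unlike the topological category, where Remark~\ref{rem:universally-open} applies); what is needed and what you implicitly use is exactly the normality of $Y$ to upgrade openness to universal openness, as in the first EGA citation.
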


Openness of $\vp$ at $\pp$ means, as above, openness in some neighbourhood of $\pp$. A \emph{vertical} irreducible component is (by analogy with the local analytic case) an irreducible component of the source whose image is nowhere-dense in the target.

The proof of Theorem~\ref{thm:open-alg} is virtually identical with the one above, because: (a) openness of a map with a normal target is equivalent to universal openness (see \cite[Cor.\,14.4.3]{Gro}), and (b) a map $\vp:X\to Y$ with $X$ pure-dimensional and $Y$ normal is open if and only if $\vp$ is dominant and the fibres of $\vp$ are equidimensional and of constant dimension (see \cite[Cor.\,14.4.6]{Gro}).
\medskip

\begin{remark}
\label{rem:false-in-general}
Interestingly, Theorems~\ref{thm:open} and~\ref{thm:open-alg} are false, in general, without the pure-dimensionality assumption on $X$. This can be seen in the following example.
\end{remark}

\begin{example}
\label{ex:1}
Let $X=X_1\cup X_2$ be a subset of $\C^9$ (with coordinates $(t,x)=(t_1,t_2,t_3,x_1,\dots,x_6)$), where
\begin{align}
\notag
X_1&=\{(t,x):\, t_1x_1+t_2x_2+t_3x_3=t_2x_1+t_1x_2=x_4=x_5=x_6=0\}\,,\\
\notag
X_2&=\{(t,x):\, t_1=t_2=t_3=0\}\,.
\end{align}
Clearly, $X_2$ is irreducible, of dimension $6$. We claim that $X_1$ is of pure dimension $4$.
To see this, set $A=\{(t,x)\in X_1: \det \begin{bmatrix} t_1 & t_2 \\ t_2 & t_1\end{bmatrix}=0\}$.
In $X_1\setminus A$, one can solve the first two defining equations of $X_1$ for $x_1$ and $x_2$, hence $X_1\setminus A$ is a $4$-dimensional manifold. On the other hand, it is not difficult to see that $\dim{A}=3$. Since $X_1$ is defined by $5$ equations in $\C^9$, it follows that $\dim_\xi X_1\geq4$ for every $\xi\in X_1$. Therefore $A$ is nowhere-dense in $X_1$ and $X_1=\overline{X_1\setminus A}$ is of pure dimension $4$.

Define $\vp:X\to Y=\C^3$ as
\[
(t,x)\,\mapsto\,(t_1+x_4,t_2+x_5,t_3+x_6)\,.
\]
We claim that $\vp$ is not open at $0$. For this, it suffices to show that $\vp|_{X_1}$ is not open in any neighbourhood of $0$.
Consider the set $W=\{(t,x)\in X_1:\, t_3=0,\, t_1=t_2\neq0\}$. Then $W\subset X_1\setminus X_2$, and for every $\xi\in W$, we have $\fbd_\xi\vp=2$. On the other hand, the generic fibre dimension of $\vp|_{X_1}$ is $1$, as is easy to see. Therefore $\vp|_{X_1}$ is not open at any such $\xi$, by the Remmert Open Mapping Theorem.
But $W$ is adherent to $0\in\C^9$, which proves our claim.
\smallskip

Finally, let $\sigma:\C^3\to Y$ be given as $\sigma(z_1,z_2,z_3)=(z_1z_3,z_2z_3,z_3)$.
We shall show that $\vp':X\times_YZ\to Z$, the pullback of $\vp$ by $\sigma$ has no isolated vertical components through $(0,0)$.
As in the proof of Theorem~\ref{thm:open}, write $X\times_YZ=T\cup T'$, where $T'=\vp'^{-1}(0)$ and $T$ is biholomorphic with $X\setminus\vp^{-1}(\{y_3=0\})$. Since $\vp_0$ has no isolated algebraic vertical components itself, it follows that the image of (an arbitrarily small neighbourhood of a point $\xi$ near $0$ in) $T$ under $\vp'$ contains an open subset of $Y$. Therefore, if $\vp'$ has an isolated algebraic vertical component $\Sigma$ through $(0,0)$, then $\Sigma\subset T'$. But $T'$ is a fibre of an open map $\psi'$ defined as the pull-back by $\sigma$ of $\psi:=\vp|_{X_2}$. By Theorem~\ref{thm:open}, $\psi'$ has no isolated algebraic vertical components, which proves that there is no such $\Sigma$.
\end{example}

\section{Algebraic case}
\label{sec:alg}

Our flatness criterion in the algebraic setting can be reduced to the analytic case, settled above, by means of the following simple but fundamental observation.

\begin{remark}
\label{rem:ff}
Let $\K=\R$ or $\C$. Suppose that $M$ is a module over the local ring $R=\K[x]_{(x)}$, where $x=(x_1,\dots,x_n)$. Then
\begin{itemize}
\item[(i)] $M$ is a flat $R$-module if and only if $M\cdot\K\{x\}$ is a flat $\K\{x\}$-module.
\item[(ii)] Given non-zero $r\in R$, $r$ is a zerodivisor in $M$ if and only if $r$ is a zerodivisor in $M\cdot\K\{x\}$.
\end{itemize}
Indeed, the modules $M$ and $M\cdot\K\{x\}$ share the same completion $\widehat{M}$ (with respect to the $(x)$-adic topology in $R$ and $\K\{x\}$, respectively) over $\K[[x]]=\widehat{R}=\widehat{\K\{x\}}$.
By faitfull flatness of $\widehat{R}$ over $R$ (\cite[Ch.\,III, \S\,3, Prop.\,6]{Bou} and \cite[Ch.\,III, \S\,5, Prop.\,9]{Bou}), $M$ is $R$-flat if and only if $\widehat{M}$ is $\K[[x]]$-flat. By faitfull flatness of $\widehat{\K\{x\}}$ over $\K\{x\}$, in turn, $\widehat{M}$ is $\K[[x]]$-flat if and only if $M\cdot\K\{x\}$ is $\K\{x\}$-flat.

Claim (ii) follows from faithfull flatness of completion applied to the sequence $0\to M\stackrel{\cdot r}{\rightarrow}M$.
\end{remark}
\medskip

\subsubsection*{Proof of Theorem~\ref{thm:alg-flat-regular}}
Let $\uk$, $R=\uk[y_1,\dots,y_n]$, and $F\cong R[x]^q/M$ be as in the statement of the theorem.
Let $S=\K[z_1,\dots,z_n]$ and let $\kappa:R\to S$ be the morphism
\[
\kappa(y_1)=z_1z_n, \dots, \ \kappa(y_{n-1})=z_{n-1}z_n, \ \kappa(y_n)=z_n\,.
\]
Then, the condition \ $\widetilde{M}=\widetilde{M}:y_n$ is equivalent to saying that $y_n$ is not a zerodivisor in $F_{(x,y)}\otimes_{R_{(y)}}S_{(z)}$.

Suppose first that $F_{(x,y)}$ is a flat $R_{(y)}$-module. Since flatness is preserved by base change, it follows that $F_{(x,y)}\otimes_{R_{(y)}}S_{(z)}$ is flat and hence torsion-free over $S_{(z)}$ (by Remark~\ref{rem:flat-tor-free}). Hence also $F_{(x,y)}\otimes_{R_{(y)}}S_{(z)}$ is torsion-free over $R_{(y)}$, because $R_{(y)}$ embeds into $S_{(z)}$. In particular, $y_n$ is not a zerodivisor in $F_{(x,y)}\otimes_{R_{(y)}}S_{(z)}$.
\smallskip

Conversely, suppose that $F_{(x,y)}$ is not flat over $R_{(y)}$.
We will proceed in three steps, depending on $\uk$. First, suppose that $\uk=\C$.
Then $y_n$ is a zerodivisor on $F_{(x,y)}\otimes_{R_{(y)}}S_{(z)}$, by Theorem~\ref{thm:1} and Remark~\ref{rem:ff}.
\smallskip

Next, suppose that $\uk$ is algebraically closed. Then our result follows from the above case, by the Tarski-Lefschetz Principle (see, e.g., \cite{Se}), as flatness can be expressed in terms of a finite number of relations (\cite[Cor.\,6.5]{Eis}).
\smallskip

Finally, let $\uk$ be an arbitrary field of characteristic zero, and let $\K$ denote an algebraic closure of $\uk$. Set $R':=R\otimes_{\uk}\K$, $S':=S\otimes_{\uk}\K$, and $F':=F\otimes_{\uk}\K$. It is not difficult to verify that $R'$ is a faithfully flat $R$-module (see, e.g., \cite{ABM2}). Therefore, $F_{(x,y)}$ is not $R_{(y)}$-flat if and only if $F'_{(x,y)}$ is not $R'_{(y)}$-flat. By the previous part of the proof, the latter implies that $y_n$ is a zerodivisor in $F'_{(x,y)}\otimes_{R'_{(y)}}S'_{(z)}$. But $F'_{(x,y)}\otimes_{R'_{(y)}}S'_{(z)}\cong(F_{(x,y)}\otimes_{R_{(y)}}S_{(z)})\otimes_{R_{(y)}}R'_{(y)}$, so $y_n$ is also a zerodivisor in $F_{(x,y)}\otimes_{R_{(y)}}S_{(z)}$, which completes the proof.
\qed
\medskip

\subsubsection*{Proof of Theorem~\ref{thm:alg-flat-singular}}
Let $R=\K[y_1,\dots,y_n]/I$, where $I$ is a proper ideal in $\K[y_1,\dots,y_n]$. Let $A=R[x_1,\dots,x_m]/Q$ be an $R$-algebra of finite type, and let $F$ be a finitely generated $A$-module. Suppose that $F_{(x,y)}$ is not flat over $R_{(y)}$.

Let $\vp:X\to Y$ be the $\K$-analytic mapping of $\K$-analytic spaces associated to the morphism $\Spec{A}\to\Spec{R}$, and let $\widetilde{F}$ denote the finite $\OO_{X,0}$-module $F_{(x,y)}\cdot\OO_{X,0}$. The problem being local, we can assume that $Y$ is a subspace of $\K^n$. Let further $\sigma:\K^n\to\K^n$ be the mapping sending $(z_1,\dots,z_{n-1},z_n)$ to $(z_1z_n,\dots,z_{n-1}z_n,z_n)$, so that the pull-back homomorphism $\sigma^*_0:\K\{y\}\to\K\{z\}$ is given by the same formulas as $\kappa$ in the statement of the theorem. By assumption on the ideal $I^*$, the strict transform $Z$ of $Y$ (under $\sigma$) passes through the origin (in $\K^n$ with the $z$-variables), and so $(\sigma|_Z)_0:Z_0\to Y_0$ is a germ of the blowing-up of $\{0\}$ in $Y$, as in Theorem~\ref{thm:main}.

Now, $\widetilde{F}$ is a non-flat $\OO_{Y,0}$-module, by Remark~\ref{rem:ff}. Hence, by Theorem~\ref{thm:main}, $y_n$ is a zerodivisor in $\widetilde{F}\antens_{\OO_{Y,0}}\OO_{Z,0}$. Thus, by Remark~\ref{rem:ff} again, $y_n$ is a zerodivisor in $F_{(x,y)}\otimes_{R_{(y)}}S/I^*_{(z)}$, as required.
\qed
\medskip

\begin{example}
\label{ex:non-flat}
Consider the polynomial mapping $\vp:X\to Y=\C^3$ from Example~\ref{ex:1}.
That is, let $X=X_1\cup X_2$ be a subset of $\C^9$ (with coordinates $(t,x)=(t_1,t_2,t_3,x_1,\dots,x_6)$), where
\begin{align*}
X_1&=\{(t,x):\, t_1x_1+t_2x_2+t_3x_3=t_2x_1+t_1x_2=x_4=x_5=x_6=0\}\,,\\
X_2&=\{(t,x):\, t_1=t_2=t_3=0\}\,,
\end{align*}
and let
\[
\vp(t,x)\,=\,(t_1+x_4,t_2+x_5,t_3+x_6)\,.
\]
By Example~\ref{ex:1}, $\vp$ is not open at $0$. Since flatness implies openness, by a theorem of Douady (\cite{Dou}), it follows that $\vp$ is not flat at $0$. This can be verified directly, by means of Theorem~\ref{thm:alg-flat-regular}, as follows:

$X$ can be embedded into $\C^9\times Y$ via the graph of $\vp$. Hence, the coordinate ring $A[X]$ of $X$ can be identified with $\C[y,t,x]/(I_1+I_2)$, where
\begin{align*}
I_1&=(y_1-t_1-x_4,\,y_2-t_2-x_5,\,y_3-t_3-x_6)\quad\mathrm{and}\\
I_2&=(t_1x_1+t_2x_2+t_3x_3,\,t_2x_1+t_1x_2,\,x_4,\,x_5,\,x_6)\cap(t_1,\,t_2,\,t_3)\,.
\end{align*}
Set $F=A[X]$ and $R=\C[y]$. We want to prove that $F_{(t,x,y)}$ is not flat over $R_{(y)}$.

Let $\widetilde{I}_1$ (resp. $\widetilde{I}_2$) denote the ideal obtained from $I_1$ (resp. $I_2$) by substituting $y_1y_3$ for $y_1$, and $y_2y_3$ for $y_2$; i.e.,
\begin{align*}
\widetilde{I}_1&=(y_1y_3-t_1-x_4,\,y_2y_3-t_2-x_5,\,y_3-t_3-x_6)\quad\mathrm{and}\\
\widetilde{I}_2&=(t_1x_1+t_2x_2+t_3x_3,\,t_2x_1+t_1x_2,\,x_4,\,x_5,\,x_6)\cap(t_1,\,t_2,\,t_3)\,.
\end{align*}
By Theorem~\ref{thm:alg-flat-regular}, the non-flatness of $F_{(t,x,y)}$ over $R_{(y)}$ can be detected by showing that $(\widetilde{I}_1+\widetilde{I}_2):y_n\varsupsetneq \widetilde{I}_1+\widetilde{I}_2$.
We have verified, with help of a computer algebra system Singular (see, e.g., \cite{GP}), that $(\widetilde{I}_1+\widetilde{I}_2):y_n$ contains an element $x_6y_2-x_5$, which does not belong to $\widetilde{I}_1+\widetilde{I}_2$. This is easy to see.
Indeed, on the one hand,
\begin{multline*}
y_3(x_6y_2-x_5)=x_6y_2y_3-x_5y_3\equiv_{\widetilde{I}_1}x_6(t_2+x_5)-x_5y_3=t_2x_6+x_5x_6-x_5y_3\\
\equiv_{\widetilde{I}_2}0+x_5x_6-x_5y_3\equiv_{\widetilde{I}_1}x_5x_6-x_5(t_3+x_6)=-t_3x_5\equiv_{\widetilde{I}_2}0\,.
\end{multline*}
On the other hand, suppose that $x_6y_2-x_5\in\widetilde{I}_1+\widetilde{I}_2$. Then, after evaluating at zero the variables
$y_1, y_2, y_3, t_1, t_3, x_1, x_2, x_3, x_4$, and $x_6$, we would get
\[
-x_5 \in (t_2+x_5,t_2x_5)\!\cdot\!\C[t_2,x_5]\,,
\]
which is false.
\end{example}

\section*{Acknowledgements}

We would like to thank the anonymous referee for many valuable remarks that allowed us to improve the exposition of our results. In particular, we are indebted to the referee for suggesting the present form of Theorem~\ref{thm:main}.



\begin{thebibliography}{99}


\bibitem {A1} J.\,Adamus,
 \textit{Vertical components in fibre powers of analytic spaces}, J. Algebra \textbf{272} (2004), 394--403.

\bibitem {ABM1} J.\,Adamus, E.\,Bierstone and P.\,D.\,Milman,
 \textit{Geometric Auslander criterion for flatness}, Amer. J. Math. \textbf{135} (2013), no.1, 125--142.

\bibitem {ABM2} J.\,Adamus, E.\,Bierstone and P.\,D.\,Milman,
 \textit{Geometric Auslander criterion for openness of an algebraic morphism}, Bull. London Math. Soc. \textbf{45} (2013), 1060--1064.

\bibitem {AS} J.\,Adamus and H.\,Seyedinejad,
 \textit{Flatness testing over singular bases}, Ann. Polon. Math. \textbf{107} (2013), 87--96.

\bibitem {Au} M.\,Auslander,
 \textit{Modules over unramified regular local rings}, Illinois J. Math. \textbf{5} (1961), 631--647.

\bibitem {AI} L.\,Avramov and S.\,Iyengar,
 \textit{Detecting flatness over smooth bases}, J. Algebraic Geom. \textbf{22} (2013), no. 1, 35--47.

\bibitem {BM} E.\,Bierstone and P.\,D.\,Milman,
 The local geometry of analytic mappings, Dottorato di Ricerca in Matematica, ETS Editrice, Pisa, 1988.


\bibitem {Bou} N.\,Bourbaki,
 Commutative Algebra, Elements of Mathematics, Springer, Berlin, 1989.

\bibitem {Dou} A.\,Douady,
 \textit{Le probl\`eme des modules pour les sous-espaces analytiques compacts d'un espace analytique donn\'e}, Ann. Inst. Fourier
  (Grenoble) \textbf{16:1} (1966), 1--95.

\bibitem {Eis} D.\,Eisenbud,
 Commutative algebra with a view toward algebraic geometry, Grad. Texts in Math., Vol. 150, Springer, New York, 1995.

\bibitem {GK} A. Galligo and M. Kwieci{\'n}ski,
 \textit{Flatness and fibred powers over smooth varieties}, J. Algebra \textbf{232} (2000), 48--63.

\bibitem {GR} H.\,Grauert and R.\,Remmert,
 Analytische Stellenalgebren, Springer, Berlin, Heidelberg, New York, 1971.

\bibitem {GP} G.-M.\,Greuel and G.\,Pfister,
 A Singular introduction to commutative algebra, Second edition, Springer, Berlin, 2008.

\bibitem {Gro} A. Grothendieck and J. Dieudonn\'e,
 \textit{El\'ements de g\'eom\'etrie alg\'ebrique IV. Etudes locale des sch\'emas et des morphismes de sch\'emas},
 Publ. Math. I.H.E.S. \textbf{20}, 1964; \textbf{24}, 1965; \textbf{28}, 1966.
 
\bibitem {Har} R. Hartshorne,
 ``Algebraic Geometry'', Springer, New York, 1977.
 
\bibitem {Hau} H.\,Hauser,
 \textit{The Hironaka theorem on resolution of singularities (or: A proof we always wanted to understand)}, Bull. Amer. Math. Soc.
  \textbf{40} (2003), 323--403 (electronic).

\bibitem {H} H.\,Hironaka,
 \textit{Stratification and flatness}, Real and Complex Singularities, Proc. Oslo 1976, ed. Per Holm, Sijthoff and Noordhoff (1977), 199--265.

\bibitem {Loj} S.\,{\L}ojasiewicz,
 Introduction to Complex Analytic Geometry, Birkh\"{a}user, Basel, Boston, Berlin, 1991.

\bibitem {Se} A.\,Seidenberg,
 \textit{Comments on Lefschetz's principle}, Amer. Math. Monthly \textbf{65} (1958), 685--690.

\end{thebibliography}
\end{document}